\newtheorem{theorem}{Theorem}
\newtheorem{prop}{Proposition}
\newtheorem{cor}{Corollary}
\newtheorem{lemma}{Lemma}
\newtheorem*{main lemma}{The main lemma}
\newtheorem{dfn}{Definition}
\title[Each simple $n$-polytope   has a point with  $2n+4$ normals ]{Each simple convex polytope from $\mathbb{R}^n$  has a point with  $2n+4$ normals to the boundary }
\author{Ivan Nasonov, Gaiane Panina}
\address{I. Nasonov: Saint Petersburg University, 7/9 Universitetskaya nab., St. Petersburg, 199034 Russia   wanua-nasonov-i04@yandex.ru;  G. Panina: St. Petersburg department of Steklov institute of mathematics  RAS, Fontanka 27, St. Petersburg, 191023 Russia
 gaiane-panina@rambler.ru  }
\keywords{Morse theory, bifurcation, critical point \ \   MSC  52B70 }
\date{\today}
\begin{document}

\begin{abstract}

We prove that for $n>3$ each generic simple polytope in $\mathbb{R}^n$   contains a point  with at least $2n+4$ emanating normals to the boundary. 

This result is a piecewise-linear counterpart of a long-standing problem about normals to smooth convex  bodies.
\end{abstract}

\maketitle

\section{Introduction}

\medskip

It is conjectured since long that for any smooth convex body $\mathbf{P}\subset \mathbb{R}^n$ there exists a point in its interior   which belongs to at least $2n$ normals from different points on the boundary of $\mathbf{P}$. The proof of the conjecture is a simple exercise for $n=2$. There are two known proofs for  $n=3$:  a geometrical one by E. Heil \cite{Heil},  and a topological one by J. Pardon~\cite{Pardon}. 
 For   $n=4$ the conjecture was proved by J. Pardon \cite{Pardon}, and (to the best of our knowledge) nothing is known for higher dimensions.
In this paper we solve a similar problem for   simple convex polytopes. 

\medskip

 Let $\mathbf{P}\subset \mathbb{R}^n$  be a compact convex polytope with non-empty interior and let $y\in Int~\mathbf{P}$.
  A \textit{normal} to the boundary of $\mathbf{P}$ emanating from $y$ is a line $yz$ such that $z\in \partial \mathbf{P}$, and  $yz$ is orthogonal to some supporting hyperplane containing $z$.   The point $z$ is called the\textit{ base} of this normal.

\medskip

Our main result is:

\begin{theorem}\label{Thmmain}
  For $n>2$,  each generic  simple 
    polytope  $\mathbf{P} \in \mathbb{R}^n$   has a point with (at least) $2n+4$ emanating normals to the boundary.
\end{theorem}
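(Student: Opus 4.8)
The plan is to translate the problem into Morse theory for the squared distance function and then into a purely combinatorial count over the faces of $\mathbf P$. For $y\in\operatorname{Int}\mathbf P$ set $\rho_y(z)=|z-y|^2$ on the sphere $\partial\mathbf P\cong S^{n-1}$; for a generic simple $\mathbf P$ and a generic $y$ this is a (stratified) Morse function, and the normals emanating from $y$ are exactly its critical points, so it suffices to produce a $y$ with $\#\Crit(\rho_y)\ge 2n+4$. I would first prove the local dictionary: a point $z$ in the relative interior of a face $G$ is a critical point precisely when $z=\pi_{\Aff(G)}(y)$ and $z-y\in N_G$, the outer normal cone of $G$. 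In that case the corner of $\partial\mathbf P$ along $G$ is a convex simplicial cone, $\rho_y$ is a minimum along $G$ and a maximum in every transverse direction, and the Morse index equals the codimension $(n-1)-\dim G$. Thus facets carry index-$0$ minima, vertices carry index-$(n-1)$ maxima, and $\#\Crit(\rho_y)$ equals the number of faces $G$ with $\pi_{\Aff(G)}(y)\in\operatorname{relint}G$ and $\pi_{\Aff(G)}(y)-y\in N_G$.

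Writing $m_i(y)$ for the number of index-$i$ critical points, the two standing facts are the Euler identity $\sum_i(-1)^i m_i=\chi(S^{n-1})$ and the strong Morse inequalities $m_k-m_{k-1}+\cdots\ge b_k-b_{k-1}+\cdots$ for $S^{n-1}$. The first rewrites the total as $\#\Crit(\rho_y)=2E-\chi$ with $E=\sum_{i\ \mathrm{even}}m_i$, so any mechanism forcing many low-index critical points is automatically doubled into a lower bound on the total. The engine I would use is the incenter $y_0$, the center of the largest ball inscribed in $\mathbf P$. Its optimality means the outer normals of the facets touched by the ball positively span $\mathbb R^n$, so the ball touches at least $n+1$ facets; these contact points are nondegenerate local minima of $\rho_{y_0}$ and persist for a generic $y$ near $y_0$, giving $m_0(y)\ge n+1$. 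The strong Morse inequality $m_1\ge m_0-1$ then forces $m_1\ge n$, and together with the always-present global maximum $m_{n-1}\ge 1$ this yields, for $n\ge3$, the robust bound $\#\Crit(\rho_y)\ge (n+1)+n+1=2n+2$.

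It remains to gain the final two normals, and this is where I expect the real difficulty and where bifurcation analysis must enter. The linear constraints above are saturated by the model vector $(m_0,\dots,m_{n-1})=(n+1,n,0,\dots,0,1)$, so no further inequality of Morse type can by itself reach $2n+4$; the extra pair must come from geometry. The plan is to rule out this minimal configuration by a bifurcation argument: as $y$ crosses the focal (caustic) hypersurface of $\mathbf P$ the critical set changes by a cancelling index pair, and I would track these births and deaths to show that an incenter-type chamber cannot carry the minimal Morse data — equivalently, that the degenerate inball configuration, once unfolded, must spawn at least one further pair of critical points.

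The principal obstacles are therefore threefold. First, making the stratified index formula and the $\pm2$ wall-crossing rule rigorous for every face type of a simple polytope, since the decisive critical points live on lower-dimensional faces where $\rho_y$ is only piecewise smooth. Second, the genericity and transversality statements ensuring that for an open dense family of simple polytopes the caustic is a stratified hypersurface carrying only the standard local models. Above all, the quantitative bifurcation bookkeeping that upgrades the robust bound $2n+2$ to the asserted $2n+4$: this is the hard core of the argument and presumably the content of the main lemma.
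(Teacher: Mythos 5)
Your setup (the dictionary between normals and critical points of $SQD^{\mathbf{P}}_y$, the index-equals-codimension rule, the parity and wall-crossing facts) matches the paper's, and your incenter argument does give the bound $2n+2$ --- which the paper records as already known and dispenses with in a remark. But everything after that is a plan, not a proof, and the plan as stated cannot work. You propose to rule out the minimal Morse vector $(n+1,n,0,\dots,0,1)$ by unfolding the degenerate inball configuration and showing it must spawn a further cancelling pair. Nothing in this plan uses $n\ge 3$: for $n=2$ the incenter of a triangle realizes exactly the minimal configuration $(3,3)$ with $6=2n+2$ normals, and a triangle has no point with $8$ normals, so the theorem is false in dimension $2$. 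Any correct upgrade from $2n+2$ to $2n+4$ must therefore fail at $n=2$ at a specific, identifiable step; your proposed bifurcation bookkeeping near the incenter contains no such step, which is strong evidence that the mechanism you hope for does not exist in that locality. Indeed, nothing forces the extra normals to live anywhere near the incenter.

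The paper's gain of the final pair comes from a different and dimension-sensitive source: the spherical geometry of links of low-dimensional faces. For each $(n-3)$-face $F$, the link $Spher(F)$ is a spherical triangle, which is either \emph{nice} or \emph{skew}. If some $(n-3)$-face is nice, an induction (``each face of a nice face is nice,'' proved by letting $y$ travel along lines parallel to the face: distances from $y$ to the existing critical points persist, so they cannot die, and either a new pair is born or $y$ reaches a bounding hyperplane and acquires a new minimum) propagates niceness down to a vertex, producing $2n+1$ critical points on faces through that vertex; one more travel argument then forces $2n+4$. If instead all $(n-3)$-faces are skew, then for $n=4$ a Hausdorff-center argument inside the spherical tetrahedron $Spher(V)$ at any vertex produces $8$ short critical points, making the vertex nice after all; and for $n>4$ skewness induces a red/blue coloring of the $(n-2)$-faces (two acute, one obtuse around each $(n-3)$-face) which is combinatorially impossible. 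This trichotomy is precisely where $n>2$, and the split between $n=4$ and $n>4$, enters the argument; it is exactly the content your proposal defers as ``the hard core,'' so the proposal has a genuine gap at the theorem's decisive step.
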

 
 Important remarks are:
 \begin{itemize}
   \item Each polytope has a point with $2n+2$ normals. This follows  directly from the Morse count, see \cite{NasPanSiersma} for details.
   \item The statement of the theorem is not true for $n=2$: a triangle has no point with eight normals.
   \item For $n=3$ the bound is exact: there exists a tetrahedron with at most $10$ normals emanating from its inner points.
   \item  There is already a proof  for $n=3$ for all generic polytopes, not only simple ones (see \cite{NasPan},  and  a slightly different proof in \cite{Angst}) which is quite involved.
    \item The proof for $n=4$ is also involved, but that for $n>4$ is simpler.
   For this reason we conjecture that for large $n$ there is a point with $2n+6$ normals to the boundary of $\mathbf{P}$.
 \end{itemize}

  \medskip

 The \textbf{main idea of the proof in a nutshell}   looks like this:
  
  (1) In the smooth case the bases of the normals  emanating from $y$ are  the critical points of the \textit{squared distance function } $$SQD^\mathbf{P}_y(x)=|x-y|^2:\partial \mathbf{P}\rightarrow \mathbb{R}.$$ Since $SQD^{\mathbf{P}}_y$ is generically a  Morse function, Morse theory is applicable.
  The Morse-theoretic machinery extends also to polytopes \cite{NasPanSiersma}, so counting normals to the boundary is equivalent to counting critical points of $SQD^\mathbf{P}_y$.

  (2)  We  choose a very special point $y\in\mathbf{P}$, force it to travel along some carefully chosen trajectories and watch the bifurcations of critical points of $SQD^\mathbf{P}_y$. This allows  us to prove that if a polytope  $\mathbf{P}$ has no point with $2n+4$ normals, then all its $(n-3)$-faces are \textit{skew}, that is, the adjacent cones have a very specific shape.

 (3) For $n=4$ we need  to once again look at bifurcations, in the same spirit of the item (2), but using other trajectories. 
 
 (4) For $n>4$ we arrive at some non-realizable combinatorics.

 \medskip
 
 The paper relies very much on ideas developed in \cite{NasPan} and \cite{NasPanSiersma}. So we refer the reader to these papers for more detailed explanations of the backgrounds and also for more figures.

\medskip


\section{Definitions and preliminaries}\label{SecDefPrelim}

Let $\mathbf{P}\subset \mathbb{R}^n$ be a compact convex polytope with non-empty interior.
\textit{A face }of $\mathbf{P}$ is the intersection of $\mathbf{P}$ with a support hyperplane.
The faces of dimension $k$ will be called\textit{ $k$-faces} for short. $0$-faces and $1$-faces are called vertices and edges, as usual.
The faces of dimension $n-1$ are called \textit{facets}.

 Throughout the paper we assume that $\mathbf{P}$  is
\textit{simple}, that is, each vertex has exactly $n$ incident edges.

 The \textit{cone }of a face $F$ (denoted by $CF$) is defined as the positive span of $\mathbf{P}$  assuming that the origin of the ambient space $\mathbb{R}^n$ lies in the relative interior of $F$. In plain words, the cone  $CF$ is bounded by extensions of the facets containing $F$.

Let $F$ be a face of  $\mathbf{P}$. The \textit{active region} $\mathcal{AR}(F)$  \cite{NasPanSiersma} is the set of all points $y\in Int\, \mathbf{P}$ such that $F$ contains the base of some normal emanating from $y$. 

The\textit{ bifurcation set} $\mathcal{B}(\mathbf{P})$ \cite{NasPanSiersma} is defined as the union of the boundaries of active regions. It is the piecewise linear counterpart of the focal set that appears in the smooth case.

The{ bifurcation set}  cuts $\mathbf{P}$  into a number of (open) cameras. For points in one and the same camera, the number of emanating normals is one and the same, 
and the bases of the normals lie in one and the same faces.

\medskip

Although the function $SQD^\mathbf{P}_y$ is  non-smooth,
it can be treated as a Morse function.  So counting critical points  of $SQD^\mathbf{P}_{y}$ amounts to counting normals, exactly as it is in the smooth  case.

\begin{lemma} \cite{NasPanSiersma}

For $y \notin \mathcal{B}(\mathbf{P})$, we have:
\begin{enumerate}
  \item Local maxima of $SQD^\mathbf{P}_y$  are attained at some  vertices of $\mathbf{P}$.
  \item Local minima of $SQD^\mathbf{P}_y$ are attained at some facets of $\mathbf{P}$.
  \item Critical points of Morse index $m$ are attained at some  $n-m-1$-faces of $\mathbf{P}$.
   \item If $y$ crosses transversally one sheet of $\mathcal{B}(\mathbf{P})$, a pair of critical points of  $SQD^\mathbf{P}_{y}$ either dies or is born.
   The Morse indices of this pair of points differ by one.
   \item The number of critical points is even.
\end{enumerate}

\end{lemma}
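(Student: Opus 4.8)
The plan is to treat $SQD^{\mathbf{P}}_y$ as a stratified Morse function on $\partial\mathbf{P}\cong S^{n-1}$, stratified by the open faces $\operatorname{relint}(F)$, and to reduce every assertion to a single local normal form at a critical point. First I would identify the critical points. On $\operatorname{relint}(F)$ the function is smooth, and I claim that $z\in\operatorname{relint}(F)$ is a critical point (equivalently, a base of a normal from $y$) if and only if $z-y$ lies in the outer normal cone $N(F)$, i.e. in the positive span of the outer unit normals $\nu_1,\dots,\nu_{n-k}$ of the $n-k$ facets through $F$ (here $k=\dim F$, and we use that $\mathbf{P}$ is simple, so exactly $n-k$ facets meet along $F$). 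Indeed $z-y\in N(F)\subset\operatorname{aff}(F)^{\perp}$ forces $z=\pi_F(y)$, the orthogonal projection of $y$ onto $\operatorname{aff}(F)$, while the cone condition is exactly the requirement that $yz$ be orthogonal to a supporting hyperplane at $z$. The hypothesis $y\notin\mathcal{B}(\mathbf{P})$ is precisely what guarantees $z-y\in\operatorname{int}N(F)$ with $z\in\operatorname{relint}(F)$, so the critical points are isolated and nondegenerate.

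Next comes the index computation, which yields (1)--(3) simultaneously. Split $\mathbb{R}^n=L\oplus L^{\perp}$ with $L=\operatorname{aff}(F)-z$ the $k$-dimensional direction space of $F$. Writing a nearby boundary point as $z+u+v$ with $u\in L$ and $v$ in the transverse cone, and putting $w=y-z\in L^{\perp}$, one gets $SQD^{\mathbf{P}}_y(z+u+v)=|u|^2+|v-w|^2+\mathrm{const}$, since $u\perp(v-w)$. The $|u|^2$ term has a strict minimum at $u=0$, contributing index $0$ from the $k$ directions tangent to $F$. For the transverse part, let $K\subset L^{\perp}$ be the $(n-k)$-dimensional simplicial tangent cone of $\mathbf{P}$ at $z$, so that $N(F)=K^{\circ}$; then the critical condition $z-y\in\operatorname{int}N(F)$ says exactly $\langle v,w\rangle>0$ for every $v\in K\setminus\{0\}$. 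Hence for every nonzero boundary direction $v\in\partial K$ one has $SQD^{\mathbf{P}}_y(z+v)-SQD^{\mathbf{P}}_y(z)=|v|^2-2\langle v,w\rangle<0$ for small $v$, so $z$ is a strict local maximum in the transverse slice $\partial K$, contributing the full transverse index $\dim\partial K=n-k-1$. Therefore the Morse index at $z$ equals $n-k-1$. Taking $k=0$ gives (1) (vertices, index $n-1$, maxima), $k=n-1$ gives (2) (facets, index $0$, minima), and the general case is (3).

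Finally, for (4) I would examine a transversal crossing of a single sheet of $\mathcal{B}(\mathbf{P})=\bigcup_F\partial\,\mathcal{AR}(F)$. A point of $\partial\,\mathcal{AR}(F)$ is exactly where the normal form above degenerates in one of two ways: either the base $z=\pi_F(y)$ reaches the relative boundary of $F$ (passage to an adjacent face of dimension $k+1$), or $z-y$ reaches the boundary of the cone $N(F)$ (passage to an adjacent face of dimension $k-1$). In both cases the degeneration is a transverse fold, so crossing the sheet creates or destroys a pair of nondegenerate critical points lying on two faces whose dimensions differ by $1$; by the index formula their indices then differ by $1$, which is (4). Statement (5) is a parity count: writing $c_m$ for the number of index-$m$ critical points, and using $(-1)^m\equiv1\pmod 2$, one has $\sum_m c_m\equiv\sum_m(-1)^m c_m=\chi(\partial\mathbf{P})=\chi(S^{n-1})=1+(-1)^{n-1}\equiv0\pmod 2$.

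The main obstacle is the technical point underlying (4): justifying that this non-smooth function genuinely admits a Morse-theoretic normal form and that a transversal wall-crossing is precisely a fold (birth/death) rather than a more degenerate catastrophe. This is exactly what requires the piecewise-linear stratified Morse theory developed in \cite{NasPanSiersma}; once that framework is granted, the index bookkeeping and parity count above are routine.
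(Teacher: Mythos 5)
The paper does not prove this lemma at all---it is imported verbatim from \cite{NasPanSiersma}---so there is no internal proof to compare against; your argument can only be assessed on its own terms. On those terms it is essentially the standard argument and is sound: the identification of critical points with points $z\in\operatorname{relint}(F)$ satisfying $z-y\in N(F)$ (which is exactly the paper's two stated conditions, that $y$ projects to $F$ and the angle condition), the orthogonal splitting $SQD^{\mathbf{P}}_y(z+u+v)=|u|^2+|v-w|^2$ giving index $0$ along the $k$ tangent directions and a strict local maximum along the $n-k-1$ transverse boundary directions, hence total index $n-k-1$ on a $k$-face, and the mod-$2$ Euler characteristic count for item (5), are all correct.

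Two remarks. First, in your discussion of item (4) the two parenthetical dimension labels are swapped: when the base $z=\pi_F(y)$ reaches the relative boundary of $F$, it reaches a facet $F'$ of $F$, so the partner critical point lives on a face of dimension $k-1$; when $z-y$ reaches a facet of the normal cone $N(F)$, that facet is $N(G)$ for a face $G\supset F$ with $\dim G=k+1$, so the partner lives on a face of dimension $k+1$. Your conclusion is unaffected, since in both cases the dimensions (hence, by your index formula, the indices) differ by one, but as written the correspondence is backwards. Second, as you yourself flag, the assertion that a transversal crossing of a single sheet of $\mathcal{B}(\mathbf{P})$ is precisely a fold---the birth or death of exactly one nondegenerate pair---is the genuine technical content of item (4), and you defer it to the piecewise-linear Morse framework of \cite{NasPanSiersma}. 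Since the present paper cites that same reference for the entire lemma rather than proving it, this deferral is consistent with the paper's own treatment, and the remainder of your bookkeeping is correct.
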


\textbf{Convention:}  From now on, we call local maxima (local minima) just maxima and minima for short.

\medskip

We say that a point $y\in Int(\mathbf{P})$ \textit{projects} onto a face $F$  if the orthogonal projection of $y$ to the affine hull of $F$ lies in the interior of the face $F$.

\bigskip

The function $SQD^\mathbf{P}_y$ has a critical point at a face $F$ iff two conditions hold:
\begin{enumerate}
  \item $y$ projects to $F$.
  \item $\overrightarrow{zy}$  makes angles  non-greater than $\pi/2$ with $zx$, where $z$ is the projection of $y$, and $x$ ranges over $\mathbf{{P}}$.
\end{enumerate}

\newpage

\textbf{Genericity convention} \label{DefGen} \cite{NasPan}: A polytope $\mathbf{P}$ with the vertex set $\mathbf{V}$ is  \textit{generic} if affine hulls of any two subsets of
$\mathbf{V}$ are neither parallel nor orthogonal, unless this is dictated by unavoidable reasons (for example, an edge is always parallel to a face containing the edge).

\medskip

Important observations are: (1) {For a generic polytope, the sheets of $\mathcal{B}(\mathbf{P})$ intersect transversally.}
(2) {Any polytope can be turned to a generic one by a close to $id$ projective transform.}

\section{Some necessary tools from spherical geometry}

A \textit{spherical polygon }  is a polygon with geodesic edges lying in the standard sphere $S^2$. It is always supposed to fit in an open hemisphere.
A \textit{spherical polytope} in   the standard sphere $S^k$ is defined analogously.

A vertex $V$ of a polytope $\mathbf{P}\subset \mathbb{R}^n$ yields a convex spherical polytope $Spher(V)\subset S^{n-1}$  which is the intersection of a small sphere centered at $V$  with  the polytope $\mathbf{P}$.  Although the sphere is small, we assume that it is equipped with the metric of the unit sphere.

Analogously, an $m$-face $F$ of a polytope $\mathbf{P}$ yields a convex spherical polytope $Spher(F)\subset S^{n-m-1}$  which is the intersection of a small sphere centered at any point $x$ in the relative interior of $F$   with  the polytope $\mathbf{P}$  and with  the $(n-m)$-plane passing through $x$ and orthogonal to $F$.  

\medskip

We borrow the definition and properties of \textit{ skew spherical triangles} from \cite{NasPanSiersma}. 

\begin{dfn}\cite{NasPanSiersma}
  A spherical triangle $V_1V_2V_3$ is nice if there is a point $X$
in its interior such that
\begin{enumerate}
  \item $X$ projects to all the three edges of the triangle, and
  \item $|XV_i|< \pi/2 \ \ \forall i$. 
\end{enumerate}

Otherwise a spherical triangle is called skew.
\end{dfn}

\begin{lemma}\label{LemmaSkewTriang}\cite{NasPanSiersma}

For a skew triangle, up to renumbering, 
\begin{enumerate}
  \item the angles  at $V_1$ and $V_3$ are acute, the angle at $V_2$ is obtuse,
  \item the edges $V_1V_2$ and $V_1V_3$  are longer than $\pi/2$, the edge $V_2V_3$ is shorter than $\pi/2$,
  \item if $Z\in V_1V_2V_3$ and $|V_1Z|<\pi/2, |V_3Z|<\pi/2$ then $|V_2Z|<\pi/2$.
\end{enumerate}

\end{lemma}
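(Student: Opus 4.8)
The plan is to translate the two defining conditions of niceness into the language of great circles and then decide, by locating those circles, when no interior witness $X$ can exist. For condition (1): $X$ projects onto the interior of an edge $V_iV_j$ iff $X$ lies strictly between the two great circles perpendicular to $V_iV_j$ at $V_i$ and at $V_j$; restricted to the triangle $T$, the perpendicular erected at a vertex intrudes into $T$ precisely when the angle there exceeds $\pi/2$. Hence the projection region $\mathcal P$ (points projecting to all three edges) is $\mathrm{Int}\,T$ with a cap removed at each obtuse vertex. For condition (2): writing $H_i=\{X:|XV_i|<\pi/2\}$ for the open hemisphere centred at $V_i$ and $C_i^{\ast}=\partial H_i$ for its polar great circle, the condition is $X\in H_1\cap H_2\cap H_3$, and the position of $C_i^{\ast}$ is governed by the side lengths since $V_j\in H_i$ iff $|V_iV_j|<\pi/2$. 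The triangle is nice iff $\mathcal P\cap H_1\cap H_2\cap H_3\cap\mathrm{Int}\,T\neq\emptyset$.

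Next I would dispose of two extreme cases to force the coarse shape of a skew triangle. First, the circumcentre $O$ projects onto the midpoint of each edge (being equidistant from its endpoints), so $O\in\mathcal P$; for an acute triangle $O$ is interior, and since its circumscribed circle is a proper small circle the circumradius is $<\pi/2$, whence $O\in H_1\cap H_2\cap H_3$. Thus every acute triangle is nice, so a skew triangle has an obtuse angle. Second, the incentre always lies in $\mathcal P$, because the feet of its perpendiculars to the edges are the tangency points of the inscribed circle, which are interior to the edges. If all sides are $<\pi/2$ then each $H_i$ contains all three vertices, hence (by convexity) contains $T$, so $H_1\cap H_2\cap H_3\supseteq T$ and the incentre is a witness. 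Therefore a skew triangle also has a side $>\pi/2$.

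The core step is the remaining casework. Place an obtuse angle at $V_2$; then $V_1V_3$, the side opposite $V_2$, is the longest, hence $>\pi/2$, and it only remains to decide which of the other two sides exceed $\pi/2$. The plan is to track the three polar circles $C_i^{\ast}$ together with the two perpendiculars at $V_2$ along the altitude dropped from $V_1$ to $V_2V_3$, and to check in each pattern whether $\mathcal P$ still meets $H_1\cap H_2\cap H_3$ inside $T$. When only $V_1V_3$ is long, when all three sides are long, or when a second angle is obtuse, I expect the circles to interleave so that the common region is nonempty and $T$ is nice. The one surviving configuration is that of the statement: the two edges at a single vertex $V_1$ are long and $V_2V_3$ is short, with the unique obtuse angle at $V_2$. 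Here $H_1\cap T$ is a cap hugging $V_1$ (bounded by $C_1^{\ast}$, which crosses both long edges at $V_1$), while $H_2\cap T$ and $H_3\cap T$ are pushed to the opposite, $V_2V_3$, side, their polar circles $C_2^{\ast},C_3^{\ast}$ crossing the long edges far from $V_1$. The computation I expect to be the main obstacle is establishing that these polar circles are ordered along the altitude so that the two regions are disjoint, leaving no witness; this simultaneously yields statements (1) and (2).

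Finally, property (3) admits a short direct proof once (2) is known. Regard $V_1,V_2,V_3$ as unit vectors; a point $Z\in T$ is a nonnegative combination $Z\propto\lambda_1V_1+\lambda_2V_2+\lambda_3V_3$, and $|ZV_i|<\pi/2$ is equivalent to $\langle Z,V_i\rangle>0$. By (2) we have $\langle V_1,V_2\rangle=\cos|V_1V_2|<0$, $\langle V_1,V_3\rangle=\cos|V_1V_3|<0$ and $\langle V_2,V_3\rangle=\cos|V_2V_3|>0$, while the spherical law of cosines at the acute angle $\alpha_3$ (opposite $V_1V_2$) gives $\cos|V_1V_2|=\cos|V_1V_3|\cos|V_2V_3|+\sin|V_1V_3|\sin|V_2V_3|\cos\alpha_3>\cos|V_1V_3|\cos|V_2V_3|$. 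Writing $\langle Z,V_2\rangle=\lambda_1\cos|V_1V_2|+\lambda_2+\lambda_3\cos|V_2V_3|$, feeding in the hypothesis $\langle Z,V_3\rangle>0$ (available since $|ZV_3|<\pi/2$) to bound $\lambda_3$ from below, and invoking the displayed inequality, a one-line estimate gives $\langle Z,V_2\rangle>0$, that is $|ZV_2|<\pi/2$.
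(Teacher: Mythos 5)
First, a framing remark: the paper does not prove Lemma \ref{LemmaSkewTriang} at all --- it imports it from \cite{NasPanSiersma} --- so your argument has to stand entirely on its own, and as written it does not, because items (1) and (2) are never actually established. The lemma asserts a single implication: skew $\Rightarrow$ the stated angle/edge pattern; equivalently, every triangle \emph{not} of that pattern (in any renumbering) admits a witness $X$ and is nice. In your outline this is exactly the portion left unproved: for the patterns ``only $V_1V_3$ long'', ``all three sides long'', ``a second angle obtuse'', you write that you \emph{expect} the polar circles to interleave so that $\mathcal{P}\cap H_1\cap H_2\cap H_3\cap \mathrm{Int}\,T\neq\emptyset$, but no witness point is exhibited and no interleaving is verified; that case analysis \emph{is} the content of (1) and (2). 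Compounding this, the step you single out as the main computation --- showing that in the surviving configuration the regions are disjoint, ``leaving no witness'' --- is the \emph{converse} implication (stated pattern $\Rightarrow$ skew). The lemma does not claim it, the paper never needs it, and completing it would not ``yield statements (1) and (2)'' as you assert; only the cases you postponed would. So the logical core of the lemma is missing, and the one computation you plan to do in detail is the one that is irrelevant.

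There are also gaps in the two reductions you do sketch. The incenter argument (all sides $<\pi/2$ $\Rightarrow$ nice) is sound. The circumcenter argument, however, uses two unproved facts: that the circumcenter of an acute spherical triangle is interior, and that the circumradius is $<\pi/2$. Your reason for the latter (``the circumscribed circle is a proper small circle'') proves nothing: a small circle on $S^2$ has two antipodal centers, with radii $r$ and $\pi-r$, so you must show that the center lying \emph{inside} the triangle is the one at spherical distance $<\pi/2$ from the vertices; this is precisely where the standing hypothesis that the triangle fits in an open hemisphere has to enter. Likewise, ``obtuse angle at $V_2$ hence $V_1V_3$ is longest'' requires $V_2$ to carry the \emph{largest} angle, which is fine after renumbering but should be said. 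By contrast, your proof of item (3) is correct and complete (and even avoids using $|V_1Z|<\pi/2$): multiplying $\langle Z,V_3\rangle>0$ by $\cos|V_2V_3|>0$ and invoking the law of cosines at the acute vertex $V_3$ gives $\langle Z,V_2\rangle>\lambda_1\bigl(\cos|V_1V_2|-\cos|V_1V_3|\cos|V_2V_3|\bigr)+\lambda_2\bigl(1-\cos^2|V_2V_3|\bigr)\geq 0$. So the proposal settles (3) modulo (1)--(2), but (1)--(2) themselves remain an unproven plan.
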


For a spherical convex polytope $P$ and a point $Y\in P$ denote by $SQD_Y^P:\partial P \rightarrow \mathbb{R}$  the spherical analog of $SQD^\mathbf{P}$.
Spherical squared  distance functions inherits  the properties  of the Euclidean squared distance function  only if the distance from $Y$ to the base of the corresponding normal are smaller than $\pi/2$.

\section{Skew and nice faces}

 For an $(n-3)$-face $F$, $Spher(F)$ is a spherical triangle.
\begin{dfn}
An  $(n-3)$-face $F$ of $\mathbf{P}$ is called \textit{nice (resp., skew)} if $Spher(F)$ is nice (resp., skew).
\end{dfn}

Let us extend this notion  to faces of other dimensions.
The motivation comes from counting normals to the faces of a cone: 
    for $y\in CF$ let $SQD^{CF}_y: \partial CF \rightarrow \mathbb{R}$ be the square distance function.
If  $F$ is a nice $(n-3)$-face of $\mathbf{P}$, then there exists a point $y\in CF$ such that $SQD_y^{CF}$ has at least $7$ critical points. 
By genericity we may assume that the set of points with at least seven normals  has a non-empty interior.

\begin{dfn}
  For $k=3,4,\ldots$,  a $(n-k)$-face $F$ of $\mathbf{P}$ is called \textit{nice} if there exists a point $y\in CF$, such that $SQD_y^{CF}$ has at least $2k+1$ critical points on the faces of $CF$.
\end{dfn}

The first observations are following:

\begin{lemma}\label{lemmaGen}
  In the above notation,  \begin{enumerate}
        \item The number of critical points of $SQD^{CF}_y$ is  odd.
\item  If an $(n-k)$-face $F$ is nice, there is a point $y\in \mathbf{P}$  with at least  $2k+1$ normals to the faces of $\mathbf{P}$  containing $F$
        \item Let $F, F'$ be some faces such that $F'$ is a facet of $F$. 
        Then  the cone $F'$ equals $CF$ intersected with a halfspace $h^+$ bounded by some hyperplane $h$.
        
    \end{enumerate}
\end{lemma}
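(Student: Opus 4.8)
First I would dispose of item (3), which is purely combinatorial and uses only that $\mathbf P$ is simple. In a simple polytope a face of codimension $j$ lies in exactly $j$ facets, and every facet containing a face contains all of its subfaces. So if $F$ is an $(n-k)$-face, the $k$ facet-hyperplanes $H_1,\dots,H_k$ through $F$ cut out the tangent cone $CF=\bigcap_{i=1}^{k}H_i^+$. A facet $F'$ of $F$ is an $(n-k-1)$-face, hence lies in exactly $k+1$ facets; these are precisely the $k$ facets through $F$ together with one further facet, whose affine hull I call $h$. Since each $H_i$ already passes through the affine hulls of both $F$ and $F'$, I get $CF'=\bigcap_{i=1}^{k}H_i^+\cap h^+=CF\cap h^+$, which is the assertion.

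For items (1) and (2) I would first peel off the lineality space. The cone $CF$ contains $L=\mathrm{Lin}(F)$, of dimension $n-k$, as lineality space, so $CF\cong L\times C$ with $C\subset\mathbb R^{k}$ a pointed (simplicial) cone whose spherical link is exactly $Spher(F)\subset S^{k-1}$. Writing $y=(y',y'')$ and $z=(z',z'')$, one has $SQD^{CF}_y(z)=|y'-z'|^2+SQD^{C}_{y''}(z'')$; the first summand forces $z'=y'$ and contributes a nondegenerate minimum, so the critical points of $SQD^{CF}_y$ are in index-preserving bijection with those of $SQD^{C}_{y''}$ on $\partial C$, and the faces of $C$ correspond to the faces of $\mathbf P$ containing $F$. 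Item (1) then follows from an Euler-characteristic count: $SQD^{C}_{y''}$ is proper and bounded below on $\partial C$ (it tends to $+\infty$ along each unbounded face), so the polytopal Morse theory of \cite{NasPanSiersma} yields $\sum_m(-1)^m c_m=\chi(\partial C)$, where $c_m$ is the number of index-$m$ critical points. But $\partial C$ deformation retracts to the apex along the rays of the cone, so it is contractible and $\chi(\partial C)=1$; since $\sum_m c_m\equiv\sum_m(-1)^m c_m\pmod 2$, the total number of critical points is odd, and the bijection transports this to $SQD^{CF}_y$.

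For item (2) I would exploit scale invariance of cones. If $F$ is nice, some $y_0\in CF$ makes $SQD^{CF}_{y_0}$ have at least $2k+1$ critical points, and by the genericity remark preceding the lemma the set of such $y_0$ has nonempty interior. For every $\lambda>0$ the point $\lambda y_0$ produces the same number of critical points, with bases scaled by $\lambda$. Fixing a base point $p$ in the relative interior of $F$ and choosing $\lambda$ small enough, the point $p+\lambda y_0$ lies in $\mathbf P$; because $\mathbf P$ coincides with $p+CF$ in a neighbourhood of $F$, each of the $\geq 2k+1$ bases lands on the face of $\mathbf P$ containing $F$ corresponding to its cone-face, and orthogonality to the supporting hyperplanes is preserved. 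These are genuine normals of $\mathbf P$ emanating from $p+\lambda y_0$, which proves the claim.

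The hard part will be item (1): one must be sure that the non-smooth, non-compact Morse theory of \cite{NasPanSiersma} genuinely delivers the Euler-characteristic identity on $\partial C$. This rests on the properness of $SQD^{C}_{y''}$ and on finiteness of the critical set (guaranteed by genericity), after which the parity conclusion is automatic. By comparison, items (2) and (3) are routine — a localization-plus-rescaling argument and a facet count, respectively.
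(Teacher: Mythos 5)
Your proposal is correct and follows essentially the same route as the paper: item (1) via a Morse-theoretic Euler-characteristic parity count on a contractible set, item (2) by localizing $y$ near $F$ (your explicit cone-scaling argument is precisely what makes the paper's ``choose $y$ lying close to $F$'' work), and item (3) from simplicity of $\mathbf{P}$. The only cosmetic difference is that you split off the lineality space and compute $\chi(\partial C)=1$ for the pointed cone, whereas the paper applies the handle-attachment count directly to a large sublevel set of $SQD^{CF}_y$, which is a ball; both yield alternating sum $1$ and hence an odd number of critical points.
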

\begin{proof}
  (1) This follows from ''attaching handles'' Morse count \cite{Milnor}:  on the one hand, the preimage $\Big(SQD_y^{{C}F}\Big)^{-1 }[0,r]$ for a fixed $y$ and big $r$ is a ball, so its Euler characteristic is $1$. On the other hand,  
   the preimage decomposes into handles of different dimensions whose Euler characteristics sum up to $1$. Each handle corresponds to a critical point, so the number of critical points is odd. 
  
  (3) For this purpose choose $y$ lying close to $F$. This ensures that the bases of the normals to $CF$ lie on the faces of $\mathbf{P}$.

  (3) Follows from simplicity of $\mathbf{P}$.
\end{proof}

\begin{prop}\label{propKey}
Each face of a nice face of $\mathbf{P}$ is nice.

\end{prop}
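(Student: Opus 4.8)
The plan is to reduce the statement to the case of a single facet, and then to manufacture the two extra critical points by a suspension-and-cut construction combined with the parity count of Lemma \ref{lemmaGen}(1).

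\emph{Reduction to facets.} I would first note that if $G$ is any face of a nice face $F$, then there is a chain $F=F_0\supset F_1\supset\dots\supset F_r=G$ in which each $F_{i+1}$ is a facet of $F_i$; so it suffices to treat one step. Thus the real task is: if $F$ is a nice $(n-k)$-face and $F'$ is a facet of $F$ (an $(n-k-1)$-face), produce $y'\in CF'$ with at least $2(k+1)+1=2k+3$ critical points of $SQD^{CF'}_{y'}$. By Lemma \ref{lemmaGen}(3) one has $CF'=CF\cap h^+$, where $h$ is the extension of the unique extra facet $\Phi\supset F'$ of $\mathbf P$; in particular $h$ contains $\mathrm{aff}(F')$ but not $\mathrm{aff}(F)$.

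\emph{Reduction to a pointed cone.} Placing the origin in $\mathrm{relint}(F')$, set $L=\mathrm{aff}(F)$ and $L'=\mathrm{aff}(F')\subset L$ (now linear subspaces), pick a unit vector $v\in L\cap L'^{\perp}$, and make $h$ a linear hyperplane. Splitting off the lineality, $CF=L\oplus C_0$ with $C_0\subset L^{\perp}$ a pointed simplicial $k$-cone whose cross-section is $Spher(F)$; since the $L$-directions are minimized freely, the critical points of $SQD^{CF}_{y}$ are exactly those of $SQD^{C_0}_{p}$, where $p$ is the $L^{\perp}$-component of $y$. Hence niceness of $F$ yields $p\in C_0$ with $N_0\ge 2k+1$ critical points of $SQD^{C_0}_{p}$, and $N_0$ is odd by Lemma \ref{lemmaGen}(1). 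In $L'^{\perp}=L^{\perp}\oplus\mathbb R v$ the same reduction turns $CF'=CF\cap h^+$ into the pointed $(k+1)$-cone $C_0'=(\mathbb R v\oplus C_0)\cap h_0^{+}$, where $h_0=h\cap L'^{\perp}$ is a linear hyperplane with $v\notin h_0$ (so the suspension $\mathbb R v\oplus C_0$ is rendered pointed); its new facet $N_0'=(\mathbb R v\oplus C_0)\cap h_0$ is carried linearly onto $C_0$ by projection along $v$.

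\emph{Producing the critical points.} A direct computation on the suspension $\widetilde C=\mathbb R v\oplus C_0$ shows that for $y'=p+\delta v$ the critical points of $SQD^{\widetilde C}_{y'}$ are exactly the $N_0$ points $z_i+\delta v$, where the $z_i$ are the critical points of $SQD^{C_0}_{p}$. Orienting $v$ into $h_0^{+}$ and taking $\delta$ large, all $z_i+\delta v$ lie strictly in $h_0^{+}$; since near each of them $\partial C_0'$ agrees with $\partial\widetilde C$, they survive as $N_0$ critical points of $SQD^{C_0'}_{y'}$. It then remains to create one further critical point on the new wall: one adjusts $y'$, staying in $h_0^{+}$ and keeping the $z_i+\delta v$ on the positive side, so that $y'$ projects into the relative interior of $N_0'$, which forces a local minimum of $SQD^{C_0'}_{y'}$ there. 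Its base lies on a face containing $\Phi$, hence is distinct from all $N_0$ survivors, so $SQD^{C_0'}_{y'}$ has at least $N_0+1$ critical points. As this number is odd by Lemma \ref{lemmaGen}(1) and $N_0$ is odd, it is in fact at least $N_0+2\ge 2k+3$, proving $F'$ nice.

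\emph{Main obstacle.} The only delicate point is this last step, because the two requirements pull $y'$ in opposite directions: retaining every survivor wants $y'$ pushed far along $v$ into $h_0^{+}$, whereas placing a base in $\mathrm{relint}(N_0')$ wants $y'$ positioned over that facet, and for large $\delta$ the $v$-ray projects \emph{out} of $N_0'$. Reconciling them uses that both are open conditions and that $h_0$ is the extension of a genuine facet of $\mathbf P$, which constrains the position of $N_0'$ relative to $C_0$: I would choose the nice point $p$ in the open, scale-invariant nice region of $C_0$ near the trace of $h_0$ on $C_0$, and then displace $y'$ by a positive combination of a direction along $N_0'$ and the inward normal to $h_0$. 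Checking that such a displacement keeps each $z_i$ strictly in $h_0^{+}$ while sending the projection of $y'$ into $\mathrm{relint}(N_0')$ is the one place requiring the detailed local geometry; everything else is the suspension identity above together with the parity count.
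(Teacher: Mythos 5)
Your setup is sound and in fact reproduces the paper's key computational ingredient: splitting off the lineality $L=\mathrm{aff}(F)$ shows that translating $y$ parallel to $F$ translates the whole critical configuration rigidly, which is exactly your suspension identity; the reduction $CF'=(\mathbb{R}v\oplus C_0)\cap h_0^+$ and the survival of the points $z_i+\delta v$ for large $\delta$ are both correct. But there is a genuine gap precisely where you flag it, and that step is not a technicality --- it is the whole difficulty of the proposition. You need a single position $y'$ at which all $N_0$ survivors are alive \emph{and} $y'$ projects into $\mathrm{relint}(N_0')$; these are two open conditions pulling in opposite directions (large $\delta$ versus small $\delta$), and nothing in your argument shows their intersection is nonempty. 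The sketched repair does not close the gap: the nice region of $C_0$ (the set of $p$ with at least $2k+1$ critical points) is an intrinsic conical subset of the interior of $C_0$, determined by $C_0$ alone, and there is no reason it should come close to the trace $h_0\cap C_0$ anywhere except at the apex, where everything degenerates. Moreover, once you displace $y'$ by a vector that is not small inside the \emph{pointed} cone $C_0'$, the critical points no longer move rigidly (rigidity holds only for translations along a lineality direction, which $C_0'$ does not have), so such a displacement may cross sheets of the bifurcation set and destroy survivors --- exactly the loss you cannot control.

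The paper resolves this tension by replacing your static choice with a first-event argument, and this is the idea your proof is missing. Move $y$ along the lineality direction toward $h$ (equivalently, decrease $\delta$). As long as no bifurcation occurs, the critical points of $SQD^{CF'}_{y}$ are exactly the rigidly translated survivors; since they move rigidly they can never collide with one another, and a death requires two already-existing critical points to collide, so the first bifurcation encountered can only be a \emph{birth}, which immediately gives $2k+3$ critical points. If instead no bifurcation occurs before $y$ reaches $h$, then just before that moment the projection of $y$ onto $h$ lies in $\mathrm{relint}(CF\cap h)$, producing a minimum on the wall in addition to the $2k+1$ survivors, i.e. at least $2k+2$ points, hence at least $2k+3$ by the parity statement of Lemma \ref{lemmaGen}(1). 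In other words, the paper never needs the coexistence you are trying to engineer: it only needs that \emph{either} coexistence occurs \emph{or} the obstruction to it (a bifurcation along the way) occurs, and both alternatives already supply the extra critical points. Your argument becomes a correct proof if you replace the ``Main obstacle'' paragraph by this dichotomy.
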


\begin{proof} We shall prove that  if  $F'\subset F$  are some faces, $\dim F = n-k,\  \dim F' = n-k-1$, and $F$ is nice, then $F'$ is also nice.
Let $h$ be a hyperplane given by Lemma \ref{lemmaGen}. Pick a point $y\in CF$ for which $SQD^{CF}_y$ has at least $2k+1$ critical points. Denote these points by $x_1, x_2, \ldots\in \partial CF$.

We can assume that $y \in CF'$,  and  $x_1,x_2,\ldots\in \partial CF'$.
Indeed, otherwise one translates $y$ by a vector  parallel to  $F$.

\textbf{Case 1.} 

 If $SQD_y^{CF}$ already has at least $2k+3$ critical points,  we are done since
 the points $x_1, x_2,\ldots,x_{2k+3}$  are the critical points of $SQD_y^{CF'}$.

\textbf{Case 2.} Otherwise, the number of critical points $SQD_y^{CF}$ is exactly $2k+1$. Now $y\in CF'$ and $x_1,\ldots,x_{2k+1}\in \partial CF \cap h^+\subset \partial CF'$ are all the critical point of $SQD_y^{CF'}$. 

Let the point $y$  travel by a  straight line parallel to $F$  such that the distance to the hyperplane $h$ decreases. Consider the\textit{ first event}, which is (by definition)  

 (a) either a bifurcation of $SQD_y^{CF'}$, or 
 
 (b) the point $y$ reaches  $h$.

Case (a).   By genericity convention we may assume that $y$ crosses transversally exactly one sheet of $\mathcal{B}(\mathbf{P})$ at a time. During the travel, the pairwise distances between the points $x_1,\ldots,x_{2k+1}$ and $y$ persist. So, they never collide with each other. The only possible bifurcation is a bifurcation with some extra critical point, not from this list. 

Case (b). Just before reaching $h$, $SQD_y^{CF'}$ has additional local minima on $h$, which was not on the list.

Anyway we have at least $2k+2$ critical points. By Lemma \ref{lemmaGen}  there are  at least $2k+3$.
\end{proof}

\begin{cor}\label{corGen}
If $\mathbf{P}$ has a nice $(n-3)$-face $F$, then there exists $V\in Vert(\mathbf{P})$ and a point $y\in \mathbf{P}$, such that $SQD^{\mathbf{P}}_y$ has at least $2n+1$ critical points on the faces of $\mathbf{P}$, containing $V$.
\end{cor}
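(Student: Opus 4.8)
The plan is to reduce the corollary to the two results just established, by descending from the nice $(n-3)$-face $F$ all the way down to one of its vertices. The key observation is that a vertex is simply an $(n-k)$-face with $k=n$, so ``nice vertex'' already encodes exactly the conclusion we want, once we feed it into Lemma \ref{lemmaGen}(2).

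First I would invoke Proposition \ref{propKey}. Since $F$ is nice and every face of a nice face is nice, each vertex $V$ of $F$ is nice. Strictly speaking, Proposition \ref{propKey} is proved only in the one-step form ``a facet of a nice face is nice,'' so I would apply it along a maximal chain of faces $F=F_0\supset F_1\supset\cdots\supset F_{n-3}=V$, where each $F_{i+1}$ is a facet of $F_i$ and hence $\dim F_i=n-3-i$; after $n-3$ applications one reaches a $0$-face, i.e.\ a vertex $V$ of $F$. Because $F$ is a face of $\mathbf{P}$, this $V$ is a genuine vertex of $\mathbf{P}$, as required by the statement.

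Next I would unwind the definition of niceness at $V$. A vertex is an $(n-k)$-face with $k=n$, so niceness of $V$ means precisely that there is a point $y\in CV$ for which $SQD_y^{CV}$ has at least $2n+1$ critical points on the faces of $CV$. Applying Lemma \ref{lemmaGen}(2) with $k=n$ then yields a point $y\in\mathbf{P}$ with at least $2n+1$ normals to the faces of $\mathbf{P}$ containing $V$. Since normals emanating from $y$ correspond to critical points of $SQD^{\mathbf{P}}_y$, and the faces of the cone $CV$ correspond to the faces of $\mathbf{P}$ incident to $V$, these are exactly $2n+1$ critical points of $SQD^{\mathbf{P}}_y$ on the faces containing $V$, which is the claim.

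The only point that genuinely requires care — and the mild obstacle in the argument — is the passage from the local model $CV$ to the global polytope $\mathbf{P}$: a priori the bases of the normals to the cone need not lie on the actual bounded faces of $\mathbf{P}$. This is absorbed into Lemma \ref{lemmaGen}(2), whose proof chooses $y$ sufficiently close to $V$ so that the bases of all normals to $CV$ land on the faces of $\mathbf{P}$ incident to $V$, preserving the count. The genericity convention ensures that during this localization none of the $2n+1$ critical points degenerate or collide, so the inequality $2n+1$ is carried over intact.
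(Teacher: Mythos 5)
Your proposal is correct and is exactly the argument the paper intends: the paper's proof is the one-liner ``Follows from Lemma \ref{lemmaGen} and Proposition \ref{propKey},'' and you have simply unpacked it — iterating Proposition \ref{propKey} along a chain of faces down to a vertex $V$, reading off niceness of $V$ as the case $k=n$ of the definition, and invoking Lemma \ref{lemmaGen}(2) (with its localization of $y$ near $V$) to transfer the $2n+1$ critical points from $CV$ to $\mathbf{P}$. No discrepancy with the paper's route.
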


\begin{proof}
    Follows from Lemma \ref{lemmaGen} and Proposition \ref{propKey}.
\end{proof}

\begin{prop}\label{PropNo-niceGen}
If $\mathbf{P}$ has at least one nice face, then there is a point in its interior with at least $2n+4$ normals to the boundary.
\end{prop}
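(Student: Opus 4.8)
The plan is to take the local abundance of critical points provided by Corollary~\ref{corGen} and to promote it to a global count of $2n+4$. First I would reduce to a nice vertex: by Proposition~\ref{propKey} every face of the given nice face is again nice, so descending to a vertex produces a nice vertex $V$. Applying Lemma~\ref{lemmaGen}(2) with $k=n$ (equivalently, Corollary~\ref{corGen}) gives a point $y$, which by the proof of Lemma~\ref{lemmaGen}(2) may be taken arbitrarily close to $V$, such that $SQD^{\mathbf{P}}_y$ has at least $2n+1$ critical points, all supported on faces of $\mathbf{P}$ containing $V$. Two more points are almost free: the vertex $V^{*}$ at maximal distance from $y$ is a critical point whose supporting face is the $0$-face $\{V^{*}\}$, and since $y$ is close to $V$ we have $V^{*}\neq V$, so $V^{*}$ is not among the previous $2n+1$; together with the fact that the total number of critical points is even this already yields at least $2n+2$.

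The heart of the matter is to produce two further critical points, and for this I would set the base point in motion toward the interior. Near $V$ only the faces around $V$ are active, but as $y$ travels inward along a generic segment it enters the active regions $\mathcal{AR}(G)$ of faces $G$ on the far side of $\mathbf{P}$; at each transversal single-sheet crossing of $\mathcal{B}(\mathbf{P})$ a pair of critical points (with Morse indices differing by one) is created or destroyed, and the total stays even. The $2n+1$ points produced by niceness should persist throughout this motion, since the nice configuration around $V$ is stable under small displacements of $y$, so that the only crossings that matter are births on the far side. As soon as one such birth has taken place the count has risen from $2n+2$ to $2n+4$, which proves the proposition.

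The step I expect to be the main obstacle is to make precise both ``the $2n+1$ points persist'' and ``the first relevant crossing is a birth''. In the cone situation of Proposition~\ref{propKey} one keeps the relevant distances literally constant by moving $y$ parallel to the face $F$; on the whole polytope this product structure is gone, so one must control the active regions directly and rule out a premature annihilation of one of the $2n+1$ points before a new far pair appears. Concretely, I would choose the direction of travel from the geometry of $V$ and of the farthest vertex $V^{*}$, and argue by convexity that along this direction $y$ enters a new far active region (equivalently, $\mathbf{P}$ acquires a facet onto which $y$ begins to project, or a far face whose normal cone $y$ enters) strictly before it leaves any active region responsible for the $2n+1$ points. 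This ``first event is a birth'' dichotomy, modeled on the proof of Proposition~\ref{propKey}, is exactly the trajectory-and-bifurcation bookkeeping announced in items~(2)--(3) of the outline.
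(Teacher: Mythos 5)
Your proposal reproduces the paper's skeleton faithfully — reduce to a nice vertex $V$ via Corollary~\ref{corGen}, get $2n+1$ critical points on faces containing $V$ plus the distant global maximum for a total of $2n+2$, then move $y$ and argue that the first count-changing event must be a birth — but the step you yourself flag as ``the main obstacle'' is left unresolved, and it is exactly the content of the paper's proof. The paper's device is the choice of trajectory: $y$ travels \emph{straight away from $V$}, along the ray from $V$ through $y$. The cone $CV$ is invariant under homotheties centered at $V$, so under this radial motion the critical points of $SQD^{CV}_{y}$ (which are the projections of $y$ onto the affine hulls of the faces through $V$, hence equivariant under the dilation) simply scale together with $y$: all pairwise distances among the $2n+1$ points, and their distances to $y$, strictly increase. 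A death would require two existing critical points to collide, and before the first bifurcation the only critical points present are these $2n+1$ points (which cannot collide with each other) and the global maximum (which cannot collide with them either). Hence a death is impossible outright, and the first transversal sheet crossing is a birth, giving $2n+4$; if no crossing occurs, $y$ reaches $\partial\mathbf{P}$ and just before that acquires an extra minimum at distance tending to zero — not among the tracked points, whose distances to $y$ only grew — giving $2n+3$ and then $2n+4$ by parity.

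Your substitute mechanism is both unproven and aimed at the wrong target. You propose to choose the direction using $V$ and the farthest vertex $V^{*}$ and to show ``by convexity'' that $y$ enters a far active region strictly before leaving any active region responsible for the $2n+1$ points; but no race between far births and near deaths is needed, because along the radial path deaths of the tracked points simply cannot happen — and conversely, merely entering or leaving an active region need not change the critical point count (a base can slide to an adjacent face), so your bookkeeping would not close the argument even if the convexity claim were established. Moreover, your remark that the product structure of Proposition~\ref{propKey} ``is gone'' on the whole polytope is precisely what the radial trick refutes: the translation invariance of $CF$ parallel to $F$ used there is replaced, at a vertex, by the dilation invariance of the cone $CV$, which is available for every vertex. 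As written, the proposal has a genuine gap at its central step, namely the persistence of the $2n+1$ critical points along the chosen trajectory.
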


\begin{proof}
By Corollary \ref{corGen}, there is a vertex $V$ and a point $y\in \mathbf{P}$, such that $SQD^\mathbf{P}_y$ has at least $2n+1$ critical points on the faces of $\mathbf{P}$, containing $V$. We can assume that $y$ lies sufficiently close to $V$. Besides we have a global maximum, so altogether there are at least $2n+2$ critical points. If there are $2n+4$ critical points, we are done. Otherwise there are exactly $2n+2$ critical points since the number is always even. So, we don't have any critical points except those listed above. 

Let $y$ travel straight away from $V$ until the first event. If $y$ reaches $\partial \mathbf{P}$, we have extra minimum just before it. In case of bifurcation, old $2n+1$ critical points cannot collide and die, since distances between them increases. Also they cannot collide with the global maximum. So the first bifurcation is a birth of two new critical points. (Here again by genericity convention we  assume that $y$ crosses transversally exactly one sheet of $\mathcal{B}(\mathbf{P})$ at a time.)
\end{proof}

\section{Proof of Theorem \ref{Thmmain} for $n=4$}

Due to Proposition \ref{PropNo-niceGen} it remains to prove the theorem for the case  when all the edges of $\mathbf{P}$ are skew. 

Take any vertex $V$ of $\mathbf{P}$. The associated spherical polytope $Spher(V)\subset S^3$ is a convex spherical tetrahedron  $\Delta$. The vertices of $\Delta$ correspond to edges of $\mathbf{P}$ that emanate from $V$, so all the vertices of $\Delta$ are skew. 

\begin{lemma}\label{LemmaSpherTetr}
  The tetrahedron  $\Delta$ contains a point $Y$ in its interior such that 
   $$SQD^{\Delta}_Y:\partial \Delta \rightarrow \mathbb{R}$$ has at least $8$ critical points such that the distances to the critical points is smaller than $\pi/2$.
\end{lemma}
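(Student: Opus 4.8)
The eight critical points we are after must each lie within spherical distance $\pi/2$ of $Y$, since only then does $SQD^\Delta_Y$ behave near its critical point like a Euclidean squared-distance function; this is exactly what lets the eight spherical critical points pass to genuine critical points of the squared-distance function on the Euclidean cone over $\Delta$ and, together with the cone apex, exhibit the vertex $V$ as a nice $(n-4)$-face (here $8+1=2\cdot 4+1$). The plan is therefore to run a bifurcation argument in the spirit of Proposition \ref{propKey} and Proposition \ref{PropNo-niceGen}: fix a reference point at which the critical points can be counted by hand, then let $Y$ travel along a carefully chosen geodesic in $\mathrm{Int}\,\Delta$ and record the \emph{births} of critical points as $Y$ crosses sheets of $\mathcal{B}(\Delta)$ transversally.

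I would first set up notation and extract the rigidity forced by the hypothesis. Write $V_1,\dots,V_4$ for the vertices of $\Delta$, $F_i$ for the opposite face and $e_{ij}$ for the edges; since every vertex is skew, the link triangle at each $V_i$ is a skew spherical triangle, so Lemma \ref{LemmaSkewTriang} pins down its angle pattern (one obtuse, two acute) and its edge-length pattern (two long, one short). Crucially, part (3) of that lemma is a distance-propagation statement: a base point lying within $\pi/2$ of two vertices automatically lies within $\pi/2$ of the third. This is the tool that will keep all eight base points inside the $\pi/2$-ball once a few of the distances are controlled, so that the spherical count is faithful to the Euclidean one.

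Next I would establish a baseline. The most transparent starting configuration is a point $Y_0$ that projects orthogonally into all four faces with each foot at distance $<\pi/2$, producing four minima of $SQD^\Delta_{Y_0}$; since $\partial\Delta\cong S^2$, the Morse relation $\#\max-\#\mathrm{saddle}+\#\min=2$ together with the handle parity then pins down the additional saddles and maxima. I would then let $Y$ travel along a geodesic oriented so that the distances from $Y$ to these four face-minima do not decrease; by the same mechanism as in Proposition \ref{propKey}, these points cannot collide and annihilate, so each transversal crossing of $\mathcal{B}(\Delta)$ is forced to be a birth of a new pair, and the count rises in steps of two, $4\to 6\to 8$. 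Steering $Y$ toward the obtuse vertex, or along the short edge, singled out by Lemma \ref{LemmaSkewTriang} is what guarantees that such a birth actually occurs.

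The main obstacle is the interplay between the two independent ways a good critical point can leave the count: a genuine bifurcation on $\mathcal{B}(\Delta)$, and a base point drifting across the $\pi/2$-sphere and thereby changing character. The heart of the proof is to choose the trajectory so that the birth of two new good critical points happens \emph{before} either $Y$ reaches $\partial\Delta$ or an existing base point leaves the $\pi/2$-ball; I expect this to require a short case analysis according to which skew vertex (equivalently, which obtuse-angle / short-edge configuration) the geodesic is aimed at, with Lemma \ref{LemmaSkewTriang}(3) invoked repeatedly to certify that the relevant distances stay below $\pi/2$. Should the clean four-face baseline fail to exist for some skew tetrahedra, the fallback is to start near a single vertex, where one maximum and the incident minima are guaranteed, and to accumulate the remaining critical points by the same travelling-and-bifurcating procedure.
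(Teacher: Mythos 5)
Your plan and the paper's proof share only the starting observation (skew vertex links, Lemma \ref{LemmaSkewTriang}); after that they diverge, and your dynamic scheme has gaps at exactly the points where the lemma is hard. The paper's proof is \emph{static}: it takes $Y$ to be the point of $\Delta$ minimizing the maximal distance to $\partial\Delta$ (a spherical Chebyshev center). This one choice forces \emph{every} point of $\partial\Delta$, hence every critical point, to lie within distance $<\pi/2$ of $Y$, so the Euclidean-type classification (minima at facets, saddles at edges, maxima at vertices) and Morse counting are valid globally and every critical point found is automatically ``good''. The count of $8$ then comes from combinatorics: since all four vertex links are skew, each vertex of $\Delta$ has exactly two incident edges with acute dihedral angle, so the four acute edges form a closed quadrilateral $\mathcal{A}$; minima of $SQD_Y$ restricted to $\mathcal{A}$ are saddles of $SQD^{\Delta}_Y$, maxima of the restriction are maxima of $SQD^{\Delta}_Y$, and a case analysis on how many vertices realize the maximal distance ($4$, $3$, or $2$, the last being impossible by Lemma \ref{LemmaSkewTriang}) yields at least $8$ critical points. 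No travelling occurs, and the global $\pi/2$ control is not an afterthought but the very purpose of the choice of $Y$.

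The concrete gaps in your proposal are these. (i) The baseline is unjustified: nothing guarantees a skew tetrahedron has a point projecting into all four facets with feet at distance $<\pi/2$, and your fallback near a vertex is also not automatic (a vertex $v$ is a local maximum for nearby $Y$ only if the direction $\overrightarrow{vY}$ makes non-obtuse angles with all edge directions at $v$, which the cone at $v$ can prevent). (ii) Your anti-death mechanism does not work: a facet minimum dies when its foot slides across an edge and collides with a saddle, and this can perfectly well happen while the distance from $Y$ to that foot is \emph{increasing}; in the paper's travelling arguments (Propositions \ref{propKey} and \ref{PropNo-niceGen}) collisions are excluded because the critical configuration moves by a translation parallel to a face or a dilation from a cone apex, so pairwise distances \emph{between the critical points themselves} persist or grow — and no such symmetry exists for geodesic motion inside a spherical tetrahedron. (iii) You give no argument that the two required births occur before $Y$ exits $\Delta$ or before some base point crosses the $\pi/2$-sphere; worse, the birth/death bookkeeping for transversal crossings of the bifurcation set is only licensed while all relevant distances stay below $\pi/2$, and beyond that radius the spherical distance function has events of a different kind (for instance, a facet acquires an interior local maximum the moment it captures the antipode of $Y$'s projection) that are not governed by that bookkeeping. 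Note finally that if your baseline did exist with all critical points under $\pi/2$ control, travelling would be superfluous — four minima plus the Morse relation on $S^2$ already force at least $8$ critical points — so the dynamic part of your plan does no work precisely because the $\pi/2$ control it presupposes is the whole problem; this is what the paper's choice of $Y$ is engineered to solve.
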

\begin{proof}
  Take $Y\in \Delta$ which minimizes the Hausdorff distance to $\partial \Delta$. Then automatically the distances from $Y$ to any other point of $\Delta$ is smaller than $\pi/2$.  In particular this means that minima, saddles and maxima of $SQD_Y^\Delta$ are attained respectively at facets, edges and vertices of $\Delta$ (which is not always true for spherical polytopes).
  
  Observe that the edges of $\Delta$  correspond to $2$-faces of $\mathbf{P}$.  Therefore, by Lemma \ref{LemmaSkewTriang}, the spherical simplex $\Delta$ has exactly four acute edges. These edges form a closed broken line $\mathcal{A}$.  Denote by $SQD_Y^{\mathcal{A}}: \mathcal{A}\rightarrow \mathbb{R}$  the spherical distance from $Y$ to the points on $\mathcal{A}$.

The following  fact is a spherical counterpart of Lemma 6 from \cite{NasPan}.
\begin{itemize}
  \item Each minimum of  $SQD_Y^{\mathcal{A}}$ is a  saddle of $SQD_Y^{\Delta}$.
  \item Each maximum of $SQD_Y^{\mathcal{A}}$ is a maximum of $SQD_Y^{\Delta}$.
\end{itemize}
  
  Let us comment on this fact a little. $SQD_Y^{\mathcal{A}}$ has a minimum at an edge $e$ iff $Y$ projects  to the edge, and distance to the projection is smaller than $\pi/2$. This is already sufficient for $SQD_Y^{\Delta}$ to have a saddle at $e$ since $e$ is acute. 
  
  Besides, $SQD^{\mathcal{A}}_Y$ never attains a minimum at a vertex of $\mathcal{A}$. Indeed, if $V_1V_2V_3$ is a skew triangle (as in Lemma \ref{LemmaSkewTriang}) then there is no point $Z \in V_1V_2V_3$ that $|V_1Z|>\pi/2$ and $|V_3Z|>\pi/2$.
  
  For the second statement, we need the following observation. Assume that $V_1V_2V_3$ is a skew triangle, and a point $X$ in it  is such that $|XV_1|$ and $|XV_3|$  are smaller than $\pi/2$. Then $|XV_2|<\pi/2$, see Lemma \ref{LemmaSkewTriang}.
  
  \medskip
  Denote by $d$ the Hausdorff distance from $Y$   to $\partial \Delta$ and consider three cases.
  
  \begin{enumerate}
    \item $d$ is attained at four vertices of $\Delta$.  Then $SQD_Y^{\Delta}$ has four maxima, therefore at least three saddles and one minimum.
    \item $d$ is attained at three vertices of $\Delta$. Then $SQD_Y^{\Delta}$ has three maxima, hence $SQD_Y^{\mathcal{A}}$ has three maxima, 
    hence $SQD_Y^{\mathcal{A}}$ has three minima, hence $SQD_Y^{\Delta}$ has three saddles. There is also at least one minimum, and since the number of critical points is even, we get $8$.
    \item $d$ is attained at two vertices of $\Delta$, say, at $U$ and $V$. Then $Y$ belongs to the edge $UV$.  Otherwise one can shift $Y$ and  make the Hausdorff distance smaller. But none of $U,V$ is a maximum for a point lying on $UV$  by the properties of skew triangles, see Lemma \ref{LemmaSkewTriang}.
  \end{enumerate}
\end{proof}

Now  prove the theorem.
Take any vertex $V$ of the polytope $\mathbf{P}$. By Lemma \ref{LemmaSpherTetr}, $V$ is a nice face. Indeed, 8 <<short>> critical points of $SQD^{\Delta}_Y$ correspond to 8 critical points of $SQD^{CV}_Y$. 

So to complete the theorem it remains to apply   Proposition \ref{PropNo-niceGen}.

\section{Proof of Theorem \ref{Thmmain} for $n>4$}

Assume the contrary, that a polytope $\mathbf{P}$ has no point with $2n+4$ normals.

 By Proposition \ref{PropNo-niceGen}, all the $(n-3)$-faces of $\mathbf{P}$ are skew. Hence by Lemma \ref{LemmaSkewTriang}  each $(n-3)$-face is contained in two acute $(n-2)$-faces and in one obtuse $(n-2)$-face. Paint acute $(n-2)$-faces  \textit{red} and the obtuse $(n-2)$-face \textit{blue}. 
 We arrive at a coloring of all $(n-2)$-faces of $\mathbf{P}$ with the key combinatorial property:
 \textit{each $(n-3)$-face has exactly two adjacent red $(n-2)$-faces and one blue $(n-2)$-face.}

 Prove that such coloring cannot exist  and thus get a contradiction.
Choose any vertex $V$ and cut it off from $\textbf{P}$ by a hyperplane $h$. Then $h\cap\mathbf{P}$ is a simplex $\Delta^{n-1}$, which inherits the coloring and the key property: the $(n-3)$-faces of $\Delta^{n-1}$ are painted red and blue  such that each of the $(n-4)$-face has exactly two adjacent red faces and one blue face.  Such a coloring does not exist. It is an easy fact; one of the ways to prove it is to further cut the vertices and to reduce the dimensions of simplices until we get $\Delta^{4}$. A simple case analysis shows that   $\Delta^{4}$ is not colorable in this sense.
 \qed

\bigskip

\textbf{Remark.
}
This proof breaks down for four-dimensional polytopes. Moreover, there exist colorable  polytopes in $\mathbb{R}^4$. For instance, a  cube is colorable.

\subsection*{Acknowledgement}
The work of I. Nasonov  was performed at the Saint Petersburg Leonhard Euler
International Mathematical Institute and supported by the Ministry of Science and Higher
Education of the Russian Federation (agreement no. 075–15–2025–343).



\begin{thebibliography}{99}

\bibitem{Milnor}  J. Milnor, {\em  Morse theory}, Annals of Math. Studies 51, Princeton University Press, Princeton, 1963.

\bibitem{Heil} E. Heil, {\em Concurrent normals and critical points under weak smoothness assumptions},
In: Discrete geometry and convexity (New York, 1982), volume 440 of Ann. New York
Acad. Sci., 170-178. New York Acad. Sci., New York, 1985.



\bibitem{Pardon} J. Pardon, {\em Concurrent normals to convex bodies and spaces of Morse functions}, Math. Ann.,
352(2012), no. 1, 55-71.


\bibitem{NasPanSiersma} I. Nasonov, G. Panina, D. Siersma,  \textit{Concurrent normals problem for convex polytopes and Euclidean distance degree.} Acta Math. Hungar. 174, 522–538.

\bibitem{NasPan} I. Nasonov, G. Panina, \textit{Each  generic polytope in $\mathbb{R}^3$ has a point with ten normals to the boundary}, arXiv:2411.12745.






\bibitem{Angst}  R. Angst, \textit{On skew triangulations of the sphere and the 10 normals conjecture for convex polytopes.} https://doi.org/10.3929/ethz-c-000783475.









\end{thebibliography}
\end{document}